\newcommand{\barr}{\mathcal{B}}
\newcommand{\CC}{\mathbb{C}}
\newcommand{\RR}{\mathbb{R}}
\newcommand{\CP}{\mathbb{C}\mathbb{P}}
\newcommand{\arr}{\mathcal{A}}
\newcommand{\baseHOne}{\mathcal{B}_{H_1}}
\newcommand{\baseArr}{\mathcal{A}}
\newcommand{\baseAtInfinity}{\mathcal{B}_{\infty}}
\theoremstyle{plain}
\newtheorem{theorem}{Theorem}[section]
\newtheorem{lemma}{Lemma}[section]
\theoremstyle{definition}
\let\c@theorem=\c@thm
\let\c@lem=\c@thm
\let\c@lemma=\c@thm
\let\c@prop=\c@thm
\let\c@cor=\c@thm
\let\c@cong=\c@thm
\let\c@defn=\c@thm
\let\c@remark=\c@thm
\let\c@example=\c@thm
\let\c@note=\c@thm
\let\c@nte=\c@thm
\let\c@observe=\c@thm
\let\c@notation=\c@thm
\newcommand{\parLines}{\mathcal{D}}
 \tikzset{join/.code=\tikzset{after node path={%
 \ifx\tikzchainprevious\pgfutil@empty\else(\tikzchainprevious)%
 edge[every join]#1(\tikzchaincurrent)\fi}}}
 \tikzset{>=stealth',every on chain/.append style={join},
          every join/.style={->}}
\begin{document}

\title[The homotopy type of unions of arrangements and pencils]{On the homotopy type of the complement of an arrangement that is a 2-generic section of the parallel connection of an arrangement and a pencil of lines}
\author{Kristopher Williams}
\address{Department of Mathematics, Doane College, Crete, NE 68333, USA}
\email{kristopher.williams@doane.edu}
%\urladdr{\href{ }{ }}

\subjclass[2010]{Primary 52C35; Secondary 32S22, 14N20, 55P10.}

\keywords{line arrangement, hyperplane arrangement, parallel connections, direct products, lattice isotopy, homotopy type, pencils of lines}
% SEARCH FOR : LASTEDIT

\begin{abstract}  Let  \( \mathcal{A} \) be a complexified-real arrangement of lines in  \( \mathbb{C}^2. \) Let  \( H \) be any line in  \( \mathcal{A} \). Then, form a new complexified-real arrangement  \( \mathcal{B}_H = \mathcal{A} \cup \mathcal{C} \) where  \( \mathcal{C} \cup \{H\} \) is a pencil of lines with multiplicity  \( m\geq 3 \), the intersection point in the pencil is not a multiple point in  \( \mathcal{A}, \) and every line in  \( \mathcal{C} \) intersects every line in  \( \mathcal{A}\setminus \{H\} \) in points of multiplicity two. In this article, we show that for  \( H_1, H_2 \in \mathcal{A} \) we may have that  \( \mathcal{B}_{H_1} \) and  \( \mathcal{B}_{H_2} \) do not have diffeomorphic complements, but the complements of the arrangements will always be homotopy equivalent.
\end{abstract}

\maketitle

\section{Introduction}\label{sec:intro}

A hyperplane arrangement \( \arr \) is a finite collection of affine subspaces in  \( \CC^l. \) When the defining forms for all of the hyperplanes in the arrangement can be chosen to have real coefficients but the arrangement is considered as an arrangement in  \( \CC^l \), the arrangement  \( \arr \) is called a complexified real arrangement. The complement of an arrangement is denoted by $M(\arr) = \CC^l \setminus \displaystyle\cup_{H \in \arr} H$. The intersection lattice  \( L(\arr) \) of an arrangement is the set on non-empty intersections of hyperplanes in the arrangement. The intersection lattice is given the structure of a partially ordered set by reverse inclusion of the elements. All properties of the arrangement or its complement that are determined from the intersection lattice are called combinatorial.  

One of the big questions in the study of arrangements is to what extent are the combinatorics and topology of the complement related. A well known result of Orlik and Solomon \cite{Orlik-Solomon-MR558866} shows that the cohomology algebra of the complement of an arrangement is determined by the combinatorics. In \cite{Ryb98}, Rybnikov gives an example of two arrangements with the same combinatorics, but the arrangements have non-isomorphic fundamental groups (thereby, the complements cannot be homotopy equivalent). See \cite{ACCM-TopoAndCombi-MR2188450}, for more results related to this idea.  A consequence of the work of Jiang and Yau\cite{JY-Intersection-lattices-MR1631597} shows that if  the combinatorics of \( \arr_a \) and  \( \arr_b \) are different, then the complements  \( M(\arr_a) \) and  \( M(\arr_b) \) are not diffeomorphic.

In \cite{Falk-homotopy-types-MR1193601} , Falk presents an example that falls between the results of Rybnikov and Jiang and Yau. That is, Falk presents a family of pairs of arrangements  \( (\arr_1,\arr_2) \) in  \( \CC^2 \) where  \( \arr_1 \) and  \( \arr_2 \) have different combinatorics and do not have homeomorphic complements, but  \( \arr_1 \) and  \( \arr_2 \) have homotopy equivalent complements. Therefore we also have  \( \pi_1(M(\arr_1)) \cong \pi_1(M(\arr_2)). \) However, all of Falk's examples have  \(\pi_1(M(\arr_1)) \cong \mathbb{F}_p \times \mathbb{F}_q \times \mathbb{Z}_1.  \) Using the work of Fan \cite{Fan-Directproducts-MR1460414} and Eliyahu, Liberman, Schaps and Teicher \cite{ELST-Char-DSFG-converse-AG}, it was shown in \cite{KW-DPFG} that any two line arrangement complements with fundamental groups isomorphic to the same direct product of free groups will have homotopy equivalent complements even though they may have different combinatorics, thus do not have homeomorphic complements. 

The purpose of this paper is to continue the study of arrangements that have different combinatorics, complements that are not diffeomorphic, but are homotopy equivalent. The main theorems of this paper follow.

\begin{theorem}\label{arb}
Let  \( \mathcal{A} \) be a complexified-real arrangement of lines in  \( \mathbb{C}^2. \) Let  \( H \) be any line in  \( \mathcal{A} \). Then, form a new complexified-real arrangement  \( \mathcal{B}_H = \mathcal{A} \cup \mathcal{C} \) where  \( \mathcal{C} \cup \{H\} \) is a pencil of lines with multiplicity  \( m\geq 3 \), the intersection point in the pencil is not a multiple point in  \( \mathcal{A}, \) and each line in  \( \mathcal{C} \) intersects each line in  \( \mathcal{A}\setminus \{H\} \) in points of multiplicity two. For any\( H_1, H_2 \in \mathcal{A} \) we may have that  \( M(\mathcal{B}_{H_1}) \) and  \( M(\mathcal{B}_{H_2}) \)  are homotopy equivalent.
\end{theorem}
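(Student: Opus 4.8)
The plan is to realize each $\barr_H$ as a $2$-generic plane section of the parallel connection $P_H := P(\arr,\mathcal{P};H)$ of $\arr$ with the pencil $\mathcal{P}=\carr\cup\{H\}$, formed along the common line $H$, and to read off the homotopy type of $M(\barr_H)$ from the far more transparent space $M(P_H)\subset\CC^3$. First I would choose affine coordinates $(x,y)$ on the plane of $\arr$ so that $H=\{x=0\}$ and the center of the pencil is the origin; writing the remaining lines of $\arr$ as $\{\alpha_j(x,y)=0\}$ and the lines of $\carr$ as $\{z=\lambda_i x\}$ with the $\lambda_i$ distinct, the parallel connection $P_H\subset\CC^3_{(x,y,z)}$ has defining planes $\{x=0\}$, $\{\alpha_j=0\}$ and $\{z-\lambda_i x=0\}$. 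On $M(P_H)$ one has $x\neq 0$, so the substitution $u=z/x$ is a biholomorphism onto its image and gives
\[
M(P_H)\;\cong\;\bigl\{(x,y):x\neq0,\ \alpha_j(x,y)\neq0\bigr\}\times\bigl\{u:u\neq\lambda_i\bigr\}\;=\;M(\arr)\times\bigl(\CC\setminus\{\lambda_1,\dots,\lambda_{m-1}\}\bigr).
\]
The key point is that the right-hand side does not depend on the choice of $H$: the first factor is the fixed complement $M(\arr)$ and the second is a plane with $m-1$ punctures. Thus $M(P_{H_1})$ and $M(P_{H_2})$ are even diffeomorphic, and in particular $\pi_1(M(P_H))\cong\pi_1(M(\arr))\times F_{m-1}$.

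Next I would transport this to the section. A generic $2$-flat $\Pi\subset\CC^3$ meets the $m$ planes of the pencil in $m$ lines through the single point $\Pi\cap\{x=z=0\}$, a point of multiplicity $m$, and meets the planes $\{\alpha_j=0\}$ in lines reproducing the combinatorics of $\arr$ together with only double points against the pencil lines; hence $M(\barr_H)=\Pi\cap M(P_H)$ is exactly the complement of the arrangement described in the hypotheses. The Lefschetz-type theorem of Hamm for generic sections of arrangement complements then yields an isomorphism $\pi_1(M(\barr_H))\xrightarrow{\ \cong\ }\pi_1(M(P_H))$, since a codimension-one generic section of a Stein manifold of complex dimension $3$ preserves $\pi_1$. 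Combining the two paragraphs gives $\pi_1(M(\barr_{H_1}))\cong\pi_1(M(\barr_{H_2}))\cong\pi_1(M(\arr))\times F_{m-1}$, independent of $H$. As a consistency check one computes the Betti numbers straight from the combinatorics, $b_1=n+m-1$ and $b_2=b_2(\arr)+(m-1)+(m-1)(n-1)$, both independent of $H$, even though the intersection lattices of $\barr_{H_1}$ and $\barr_{H_2}$, and hence by Jiang--Yau \cite{JY-Intersection-lattices-MR1631597} their diffeomorphism types, generally differ.

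The hard part will be promoting this isomorphism of fundamental groups to a homotopy equivalence of complements, because the $2$-generic section is not a homotopy equivalence onto the $3$-dimensional product $M(P_H)$ in general (one sees already that $H_3$ disagrees). Each $M(\barr_H)$ is homotopy equivalent to a minimal $2$-dimensional CW complex, and I would compare these complexes through their braid-monodromy presentations (Arvola, Libgober, Cohen--Suciu). The geometric input that should make the comparison work is that inside the fixed space $M(\arr)$ the removed pencil curves $C_i\cap M(\arr)$ are pairwise disjoint, since the only common point of the pencil lies on $H$ and is deleted; consequently the braid-monodromy generators coming from the pencil point and from the generic double points $C_i\cap A_j$ commute with the $\arr$-block up to conjugation and contribute a free factor $F_{m-1}$ attached in a homotopically standard way. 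I would establish the insensitivity of this attachment to the designation of $H$ by adapting the direct-product-of-free-groups machinery of Fan \cite{Fan-Directproducts-MR1460414}, Eliyahu--Liberman--Schaps--Teicher \cite{ELST-Char-DSFG-converse-AG} and Williams \cite{KW-DPFG}, here generalized to split off the pencil's free factor from the arbitrary group $\pi_1(M(\arr))$. I expect the main obstacle to be exactly that an isomorphism of fundamental groups does not by itself pin down the homotopy type of a $2$-complex, so the real work is to show that the braid monodromy of $\barr_H$ is equivalent, under homotopy-type-preserving moves, to a presentation in which the contribution of the pencil is a standard free factor independent of $H$.
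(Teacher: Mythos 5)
Your setup is sound and your $\pi_1$ computation is correct: the parallel-connection complement $M(P_H)$ is indeed biholomorphic to $M(\arr)\times(\CC\setminus\{m-1 \text{ points}\})$ independently of $H$ (this is the Falk--Proudfoot picture the paper itself cites), the Lefschetz/Hamm argument does give $\pi_1(M(\barr_H))\cong\pi_1(M(\arr))\times F_{m-1}$, and your Betti-number check is right. You also correctly identify the decisive obstacle: an isomorphism of fundamental groups does not determine the homotopy type of a $2$-complex. But that obstacle is exactly where your proposal stops. Everything after ``The hard part will be...'' is a plan, not a proof: you say you would compare braid-monodromy presentations ``under homotopy-type-preserving moves'' and would adapt the Fan / Eliyahu--Liberman--Schaps--Teicher / Williams machinery, ``here generalized to split off the pencil's free factor from the arbitrary group $\pi_1(M(\arr))$.'' That generalization is precisely the content of the theorem, and it cannot be borrowed from those papers: their results apply only when the \emph{entire} fundamental group is a direct product of free groups, whereas here $\pi_1(M(\arr))$ is arbitrary. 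So the proposal, as written, proves only the $\pi_1$ statement, which is strictly weaker than the claimed homotopy equivalence.

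For contrast, the paper closes this gap with explicit, finite combinatorial moves rather than any abstract recognition principle. It first uses Randell's lattice-isotopy theorem to slide the pencil of $\barr_{H_1}$ into a position where all its intersections with $\arr$ occur ``outside'' the multiple points of $\arr$ (Lemma \ref{present}), writes down the Arvola--Randell presentation there, and then performs only Tietze moves of types I, II, III --- which, by Falk's theorem, preserve the homotopy type of the presentation $2$-complex --- until the presentation coincides, after the substitution $g=h_1l_m\cdots l_2$, with the presentation of the arrangement $\arr$ together with $m-1$ parallel lines in general position. Since both complements are homotopy equivalent to their presentation $2$-complexes (Theorem \ref{ht}), this yields the homotopy equivalence $M(\barr_{H})\simeq M(\baseAtInfinity)$ directly; a second lattice isotopy (Lemma \ref{diffeo}) shows the parallel-lines model is independent of $H$, proving Theorem \ref{arb}. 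If you want to salvage your approach, the missing step is to carry out this kind of explicit presentation-level argument yourself: exhibit a concrete sequence of homotopy-preserving Tietze moves taking the braid-monodromy presentation of $\barr_H$ to an $H$-independent normal form, rather than asserting that one should exist.
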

Another way to view Theorem \ref{arb} is from the perspective of parallel connections of arrangements. Let \( \arr_p \) and  \( \arr_q \) denote two central arrangements  in  \( \CC^p \) and  \( \CC^q) \) respectively. Further, let  \( H \in \arr_p \) and  \( L \in \arr_q \).  The parallel connection of the arrangements  \( P((\arr_p,H),(\arr_q,L)) \) is a central arrangement in  \( \CC^{p+q-1} \) formed by considering both arrangements as arrangements in  \( \CC^{p+q-1} \), combining the arrangements along the hyperplanes  \( H \) and  \( L, \) then placing the rest of the arrangements in general position (see \cite{Falk-Proud-ParConn-BundlesOf-Arrs-MR1877716} for a precise definition.) In \cite{Falk-Proud-ParConn-BundlesOf-Arrs-MR1877716}, it is shown that the diffeomorphism type of the complement of the parallel connection is independent of the hyperplanes chosen to form the parallel connection along. This also shows that the fundamental group of the complements of the parallel connections is independent of the hyperplanes chosen. By a well known result, the fundamental group of the parallel connection can be determined by finding the fundamental group of a 2-generic section of the complement of the arrangement in  \( \CC^2. \) However, it is unknown if the homotopy type of the sections depends on the hyperplane section chosen.

The cone over an arrangement  \( c\arr \) is formed by homogenizing the defining polynomial for a hyperplane arrangement with a new variable  \( z \) and then adding the hyperplane defined by  \( z=0 \) to the arrangement. The result is a central arrangement in one higher dimension and with one more hyperplane than the original arrangement. Let  \( c\arr \) denote the cone over the arrangement  \( \arr \) in Theorem \ref{arb} and let  \( \mathcal{P}_m \) denote a pencil of  \( m \geq 3\) lines in  \( \CC^2. \) Theorem  \ref{arb} is then equivalent to the idea that the homotopy type of a 2-generic section of the complement of the parallel connection of  \( c\arr \) and  \( \mathcal{P}_m \) is independent of the hyperplanes chosen.

We will prove Theorem \ref{arb} as a consequence of the following theorem. Essentially, Theorem \ref{main} states that one may take a complexified-real arrangement in  \( \CC^2 \) and either add  \( m-1 \) parallel lines or add a multiple point of multiplicity  \( m \) to a line in the arrangement such that in both cases the lines added are in general position with respect to the rest of the lines in the arrangement. The resulting arrangements will have homotopy equivalent complements, generally have different combinatorics and generally not have diffeomorphic complements.

\begin{theorem}\label{main}
  Let  \( \barr_{H_1} \) be a complexified real arrangement in  \( \mathbb{C}^2 \) such that \begin{itemize}
  \item  the arrangement \( \barr_{H_1} \) is the union of two subarrangements  \( \baseArr \) and  \( \mathcal{C} \) where  \( \baseArr \cap \mathcal{C}\) contains a single line we will denote by  \( H_1 \),
  \item  \( \mathcal{C} \) is a pencil of  \( m \geq 3\) lines, and
  \item  every line of  \( \mathcal{C} \setminus H_1 \) intersects each line of  \( \baseArr \) in a point of multiplicity two.
  \end{itemize} 
  Let  \( \baseAtInfinity \) be a complexified real arrangement in  \( \mathbb{C}^2 \) such that \begin{itemize}
  \item the arrangement  \( \baseAtInfinity \) is the union of two subarrangements  \( \baseArr \) and  \( \parLines \) that have no lines in common,
  \item \( \parLines \) is a set of  \( m-1 \) parallel lines
  \item every line in  \( \parLines \) intersects each line  of  \( \arr \) in a point of multiplicity two.
  \end{itemize}
  
  Then, the complement of \( \barr_{H_1} \) is homotopy equivalent to the  complement of  \( \baseAtInfinity. \)
\end{theorem}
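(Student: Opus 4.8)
The plan is to realize $M(\mathcal{B}_{H_1})$ and $M(\mathcal{B}_{\infty})$ as total spaces of the \emph{same} fibered structure and to compare their braid monodromies. First observe that, writing $\mathcal{A}' = \mathcal{A}\setminus\{H_1\}$ and $a=|\mathcal{A}|$, both complements are obtained from $M(\mathcal{A})$ by deleting $m-1$ further lines: the lines of $\mathcal{C}\setminus\{H_1\}$ in one case and the lines of $\mathcal{D}$ in the other. In \emph{both} cases these $m-1$ lines are pairwise disjoint inside $M(\mathcal{A})$---for $\mathcal{B}_{H_1}$ their only common point is the pencil center $p\in H_1$, which does not lie in $M(\mathcal{A})$---so the two situations are genuinely parallel and differ only in how the deleted lines sit relative to $\mathcal{A}$.

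Next I would exhibit each complement as fibered over a punctured line. For $\mathcal{B}_{H_1}$, projection from the pencil center $p$ gives a map $\phi\colon M(\mathcal{B}_{H_1})\to\mathbb{CP}^1\setminus\{m\ \text{points}\}$ whose fibers are the punctured lines of the pencil; for $\mathcal{B}_{\infty}$, linear projection along the common direction of $\mathcal{D}$ gives $\psi\colon M(\mathcal{B}_{\infty})\to\mathbb{C}\setminus\{m-1\ \text{points}\}$. Since $\mathbb{CP}^1$ minus $m$ points is $\mathbb{C}$ minus $m-1$ points, both bases are homotopy equivalent to a wedge of $m-1$ circles, and in both cases a generic fiber is the $a$-times punctured line $\mathbb{C}\setminus\{a\ \text{points}\}\simeq\bigvee_a S^1$ (the trace of $\mathcal{A}$ on a fiber line). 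Using a lattice isotopy I would first move $\mathcal{A}$ and the added lines into general position so that the two projections have the same number of $\mathcal{A}$-critical directions with the same local braids, and so that the added lines meet $\mathcal{A}'$ transversally away from its multiple points. This identifies the common conceptual backbone with the bundle structure on the parallel connection of $c\mathcal{A}$ and $\mathcal{P}_m$ used in \cite{Falk-Proud-ParConn-BundlesOf-Arrs-MR1877716}.

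The heart of the argument is to compare the two braid monodromies over these bases. The enabling observation is that the monodromy around each deleted-line critical value is trivial on the fiber: the lines of $\mathcal{C}\setminus\{H_1\}$ and of $\mathcal{D}$ are themselves fibers of the respective projections, so circling them does not braid the $a$ punctures coming from $\mathcal{A}$. The remaining monodromy is produced solely by the crossings internal to $\mathcal{A}$, and after the normalization above it carries the same local braid data in both cases. I would then show that the two full monodromy systems differ only by conjugation and Hurwitz moves, and invoke the standard fact that the homotopy type of the complement of a complex line arrangement is determined by its braid monodromy up to such moves, concluding that $M(\mathcal{B}_{H_1})\simeq M(\mathcal{B}_{\infty})$.

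The main obstacle is precisely the asymmetric role of $H_1$, which is what prevents the stronger conclusion of a diffeomorphism. In $\mathcal{B}_{H_1}$ the line $H_1$ belongs to the pencil and so contributes to the \emph{base} of $\phi$, while the pencil center bundles the meridians of $H_1$ and of all of $\mathcal{C}\setminus\{H_1\}$ into a single fiber puncture; in $\mathcal{B}_{\infty}$ the line $H_1$ is an ordinary section contributing a \emph{separate} fiber puncture, while the deleted lines live entirely in the base. Reconciling these two bookkeepings---showing that collapsing the $m-1$ transverse crossings on $H_1$ into one multiplicity-$m$ pencil point leaves the monodromy system in the same Hurwitz and conjugacy class even though it changes the intersection lattice---is the delicate step, and is exactly where the combinatorial difference between the two arrangements is absorbed into a homotopy equivalence rather than a homeomorphism.
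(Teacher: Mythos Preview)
Your plan is a genuinely different route from the paper's, and the geometric picture you set up is correct and illuminating: both complements fiber over an $(m-1)$--punctured line with $a$--punctured fibers, and the monodromy around each of the $m-1$ ``added'' punctures is indeed trivial because those added lines are fibers. That is exactly the right insight, and it is the geometric shadow of the Falk--Proudfoot bundle structure on the parallel connection.

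However, the proposal has a real gap at the step you yourself flag as ``delicate.'' You assert that the two braid monodromy systems differ only by Hurwitz moves and global conjugation, but you do not carry this out, and it is not automatic: in your $\phi$ the line $H_1$ is a \emph{base} puncture (the point at infinity of the pencil of directions), while in your $\psi$ the line $H_1$ is an ordinary \emph{fiber} puncture contributing local braid monodromies at each multiple point of $\mathcal{A}$ on $H_1$. Matching these two roles is the entire content of the theorem, and invoking Hurwitz equivalence without exhibiting the moves is precisely the missing idea. A second, more technical issue is that the ``standard fact'' you cite---that braid monodromy up to Hurwitz moves and conjugation determines the homotopy type---is stated and proved (Libgober; Cohen--Suciu) for a \emph{generic linear} projection, not for projection from a point. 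Your map $\phi$ is not linear, so you would either have to prove an analogue for pencils or replace $\phi$ by a linear projection, which reintroduces all of the crossings on $H_1$ and brings you back to square one.

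The paper sidesteps both difficulties by working entirely with presentations and $2$--complexes. After a lattice isotopy pushing the pencil center far from the multiple points of $\mathcal{A}$, the Randell--Arvola presentation of $\pi_1(M(\mathcal{B}_{H_1}))$ becomes
\[
\Bigl\langle\, h_1,\dots,h_n,\; l_2,\dots,l_m \;\Big|\; [l_k,h_j]\ (j\ge 2),\ [h_1,l_m,\dots,l_2],\ R_h\,\Bigr\rangle,
\]
and a single Tietze~II substitution $g=h_1l_m\cdots l_2$, followed by Tietze~I/III reductions, converts the pencil relator $[h_1,l_m,\dots,l_2]$ into the commutators $[g,l_k]$. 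The resulting presentation coincides (after relabelling $h_1\leftrightarrow g$) with the Randell--Arvola presentation for $M(\mathcal{B}_\infty)$. Since Tietze moves of types I--III preserve the homotopy type of the associated $2$--complex, and that $2$--complex has the homotopy type of the complement (Falk; Cohen--Suciu), the result follows. This is the explicit computation your monodromy comparison would ultimately have to encode; what it buys is that the ``delicate step'' becomes a two--line algebraic identity rather than a braid argument.
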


In \cite{JY-Intersection-lattices-MR1631597}, Jiang and Yau showed that if the arrangements  \( \arr_a \) and  \( \arr_b \) have diffeomorphic complements in  \( \CP^2 \), then  \( \arr_a \) and  \( \arr_b \) have isomorphic intersection lattices.  From this result, since the coning of  \( \arr \) and the coning of  \( \barr \) are generally not lattice isomorphic, we know that the complements  \( M(\arr) \) and  \( M(\barr) \) are generally not homeomorphic.

In Section 2, some background information is given on the major tools used in the proof of the Theorem \ref{main}: cell complexes of arrangement complements, and lattice isotopies. In Section 3,  Theorem \ref{arb} and Theorem \ref{main} are proven after some lemmas related to the presentations of the fundamental groups of complements of certain arrangements. Finally, Section 4 contains an example of  Theorem \ref{main}.

\section{Background}\label{sec:background}

We follow the notation presented in \cite{OT-Arrs-MR1217488}. 
 
\subsection{Presentations and Complexes}\label{ssec:complex}
Let  \( \mathcal{P} \) be a finite group presentation. We may construct a 2-complex, denoted by  \( \vert \mathcal{P} \vert, \) by starting with one 0-cell, attaching a 1-cell for each generator, and attaching a 2-cell for each relator by following along the generators. In \cite{Falk-homotopy-types-MR1193601} the following is described:

\begin{lemma}\label{tietze}
Let  \( \mathcal{P} \) be a group presentation and  \( \vert \mathcal{P} \vert \) the associated 2-complex. The following Tietze transformations (and their inverses) may be performed on the presentation without changing the homotopy type of the 2-complex:
\begin{itemize}
\item Tietze I:  Freely reduce a relator  \( r \) or replace  \( r \) by  \( w^{-1}r^{\pm 1}w \) where  \( w \) is any word.
\item Tietze II: Delete a generator  \( a \) and a relator of the form \(aw^{-1}  \) where  \( w \) does not involve  \( a \) and replace every occurrence of  \( a \) in other relators by  \( w. \)
\item Tietze III: Replace a relator  \( r \) with  \( rs \) where  \( s \) is another relator.
\end{itemize}
\end{lemma}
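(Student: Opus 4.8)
The plan is to analyze the effect of each Tietze transformation directly on the CW structure of \( \vert \mathcal{P} \vert \) and to verify that each one is realized by a homotopy equivalence. The single tool underlying almost every case is the standard fact that attaching a \(2\)-cell along freely homotopic maps yields homotopy equivalent spaces: if \( Y \) is a space and \( f, g \colon S^1 \to Y \) are freely homotopic, then the adjunction spaces \( Y \cup_f D^2 \) and \( Y \cup_g D^2 \) are homotopy equivalent, which follows from the homotopy extension property of the pair \( (D^2, S^1) \). I would record this as a preliminary observation, together with the remark that reversing the orientation of the attaching loop produces a homeomorphic complex, so that replacing a relator \( r \) by \( r^{-1} \) changes nothing up to homotopy.

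First I would dispatch Tietze I. Writing \( \ell_r \colon S^1 \to \vert\mathcal{P}\vert^{(1)} \) for the based loop in the \(1\)-skeleton spelled out by a relator \( r \), a free reduction of \( r \) (deletion of a subword \( x x^{-1} \)) is a homotopy of \( \ell_r \), and a conjugation \( r \mapsto w^{-1} r w \) replaces \( \ell_r \) by a freely homotopic loop; combined with the orientation remark for \( r \mapsto r^{-1} \), the preliminary observation applies to the single \(2\)-cell being re-attached and leaves the rest of the complex fixed. Tietze III is only slightly more involved: here both relators \( r \) and \( s \) are present, and I would attach all the \(2\)-cells except the one for \( r \) first, forming a base space \( Y = \vert\mathcal{P}\vert^{(1)} \cup (\text{all other } 2\text{-cells}) \). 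Because the \(2\)-cell for \( s \) lies in \( Y \), the loop \( \ell_s \) is null-homotopic in \( Y \), hence \( \ell_{rs} = \ell_r \cdot \ell_s \) is freely homotopic to \( \ell_r \) in \( Y \); the preliminary observation applied to the \( r \)-cell over \( Y \) then gives the desired homotopy equivalence.

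The hard part will be Tietze II, since the generator \( a \) being eliminated may occur in many relators, so one cannot simply collapse the \(2\)-cell of the defining relator \( q = a w^{-1} \). My plan is a two-stage argument. The \(2\)-cell \( c_q \) provides a homotopy, rel basepoint, between the loop \( e_a \) carrying the generator \( a \) and the loop \( \gamma_w \) spelling out \( w \), which lies in the sub-wedge of the remaining generators since \( w \) does not involve \( a \). Working over the base \( Y = \vert\mathcal{P}\vert^{(1)} \cup c_q \) and applying the preliminary observation once for each remaining relator, I can re-attach every other \(2\)-cell along the word obtained by substituting \( w \) for each occurrence of \( a \); this yields a complex \( X' \simeq \vert\mathcal{P}\vert \) in which no relator other than \( q \) involves \( a \). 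Now \( e_a \) occurs exactly once among all attaching words, namely as the leading letter of \( q \), so \( e_a \) is a free face of \( c_q \); the elementary collapse of \( c_q \) across \( e_a \) is a deformation retraction of \( X' \) onto the presentation complex \( \vert\mathcal{P}'\vert \) of the reduced presentation. Composing the two stages gives \( \vert\mathcal{P}\vert \simeq \vert\mathcal{P}'\vert \).

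Finally, because each transformation has been exhibited as a genuine homotopy equivalence and homotopy equivalence is a symmetric relation, the inverse transformations preserve the homotopy type as well, completing the verification for all three moves. The one place demanding care is the bookkeeping in Tietze II: ensuring that the substitution stage really does replace every occurrence of \( a \), including repeated and inverse occurrences, by \( w \) before the collapse, since this is exactly what makes \( e_a \) a free face and allows the concluding deformation retraction.
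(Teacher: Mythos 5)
Your proof is correct, but note that there is nothing in the paper to compare it against: the paper does not prove Lemma \ref{tietze} at all, it simply imports the statement from Falk's paper \cite{Falk-homotopy-types-MR1193601}. Your argument is a complete, self-contained version of the standard one from simple-homotopy theory. The preliminary observation (re-attaching a \(2\)-cell along a freely homotopic map preserves homotopy type) correctly disposes of Tietze I, and your treatment of Tietze III is right: since the \(s\)-cell lies in the base \(Y\), the loop \( \ell_s \) is null-homotopic there, so \( \ell_{rs} \simeq \ell_r \) in \(Y\) and the \(r\)-cell may be re-attached accordingly. For Tietze II, your two-stage argument is the key point, and you have identified the crux accurately: the \(2\)-cell \(c_q\) gives a based homotopy \( e_a \simeq \gamma_w \) in \( \vert\mathcal{P}\vert^{(1)} \cup c_q \), which also handles inverse and repeated occurrences of \(a\) (a based homotopy of loops induces one of the inverse loops), so every other cell can be re-attached along its substituted word; only then is \(e_a\) a free face of \(c_q\), making the elementary collapse a genuine deformation retraction onto \( \vert\mathcal{P}'\vert \). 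Had you tried to collapse first and substitute afterwards, the argument would fail, since other attaching maps would still traverse \(e_a\). The symmetry remark correctly covers the inverse transformations (Tietze II in reverse being an elementary expansion). In substance this is the argument Falk's paper records, so you have filled the citation with essentially the intended proof.
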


We also recall there is another Tietze transformation which has the result of forming the wedge product of the complex with a copy of the 2-sphere \( S^2: \)
\begin{itemize}
\item Tietze IV: Introduce a relator that is a consequence of other relators.
\end{itemize}

In this paper, we will need to make use of the Arvola-Randell presentation for  \( \pi_1(M(\arr)).\) We recall the facts we need below and direct the reader to \cite{OT-Arrs-MR1217488} for the full details.

Let  \( \arr \) denote a complexified-real arrangement of lines in  \( \CC^2. \) We associate a graph to the real part of the arrangement in  \( \RR^2 \) by associating to each intersection point of lines a vertex. Then, to each line segment between vertices we associate an edge. We also associate an edge to each line segment that is only attached to one vertex (ie a ray). Let  \( G \) denote the set of edges, as these will be the generators in the presentation. 

 \begin{figure}[h!]
 \begin{center} 
\includegraphics[width=3in]{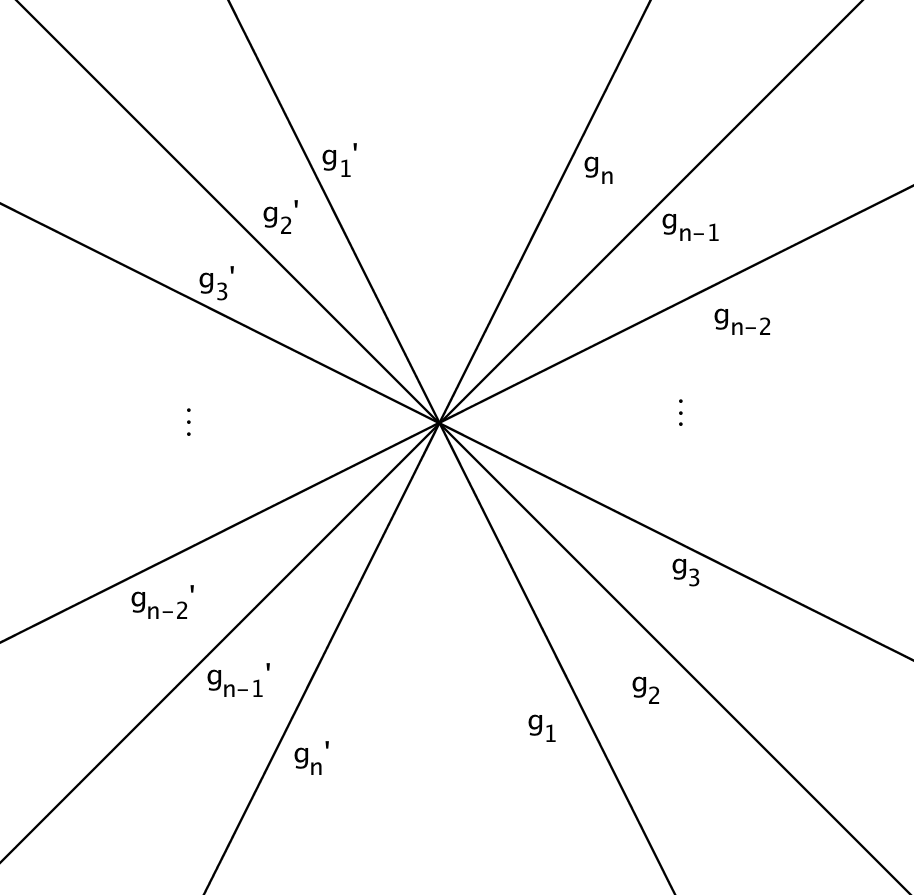}

 		\end{center}
                \caption{A typical pencil of lines.}
 											\label{pencilv}
 					\end{figure}

In Figure \ref{pencilv}, we have a typical vertex with incident edges in the real plane. At each vertex  \( v \) we have conjugation relators:
\[
g_1'g_1^{-1},   g_2' (g_2^{g_1})^{-1}, \dots, g_{n-1}'(g_{n-1}^{g_{n-2}\cdots g_1})^{-1}, g_n'g_n^{-1}
 \]

and commutation relators:
\[
[g_n, g_{n-1}\cdots g_2  g_1],
[g_n g_{n-1},g_{n-2}\cdots g_2  g_1],
\cdots
[g_n g_{n-1}\cdots g_2,  g_1]
\]
The set of commutation relators is often abbreviated as \( [g_n, g_{n-1},\dots, g_2,  g_1].\)

The Arvola-Randell presentation for  \( \pi_1(M(\arr)) \) is  \( \mathcal{P}:=\langle G \mid R \rangle \) where  \( G \) is the set of edges and  \( R \) is the set of conjugation relators and commutation relators from all of the vertices. The presentation may of course be simplified via Lemma \ref{tietze}. By the Main Theorem in \cite{Falk-homotopy-types-MR1193601}, the standard 2-complex  \( |\mathcal{P}| \) is homotopy equivalent to  \( M(\arr). \)

\subsection{The topology of  \( M(\arr) \)}

We will need two results on the topological type of the complements of arrangements. We briefly recall them here.

For the necessary background on the Arvola-Randell presentation, see \cite{Falk-homotopy-types-MR1193601} or \cite{OT-Arrs-MR1217488}. 

\begin{theorem}\label{ht}(Theorem 2.3 in \cite{Falk-homotopy-types-MR1193601}, Corollary 6.5 in \cite{CS-BraidMonodromy-MR1470093})
  Let  \( \arr \) be an arrangement in  \( \CC^2\) and let  \( \mathcal{P} \) be the Arvola-Randell presentation for  \(\pi_1( M(\arr)). \) Then the 2-complex  \( \vert \mathcal{P} \vert \) is homotopy equivalent to  \( M(\arr).\)
\end{theorem}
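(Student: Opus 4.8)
The goal is to show that the presentation complex $|\mathcal{P}|$ and the complement $M(\arr)$ have the same homotopy type, so I would split the task into two assertions: first, that $|\mathcal{P}|$ has the correct fundamental group, namely $\pi_1(M(\arr))$; and second, the substantive point, that the full homotopy type of $M(\arr)$ is already captured at the level of the $2$-complex, i.e.\ that no cells of dimension three or higher are needed. The first assertion is the classical Zariski--van Kampen theorem applied to a generic linear projection $p\colon \CC^2\to\CC$: the loops around the punctures in a generic fiber are the Arvola--Randell generators $G$, and the relators $R$ (the conjugation and commutation relators recorded at each vertex in Section~\ref{ssec:complex}) are exactly the relations imposed by carrying these loops around the images of the singular points of $\arr$. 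This simultaneously produces a comparison map $f\colon |\mathcal{P}|\to M(\arr)$: send the $0$-cell to a basepoint, send the $1$-cells to the generating meridians, and extend over each $2$-cell because the corresponding relator is null-homotopic in $M(\arr)$. By construction $f$ induces an isomorphism on $\pi_1$.

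For the second assertion I would use Salvetti's theorem that a complexified real arrangement complement deformation retracts onto the Salvetti complex $\mathrm{Sal}(\arr)$. For lines in $\CC^2$ this complex is genuinely two-dimensional, its cells in dimensions $0$, $1$, $2$ corresponding respectively to the chambers, to the incident edge--chamber pairs, and to the incident vertex--chamber pairs of the real picture of $\arr$ in $\RR^2$. This is the cleanest way to guarantee at once that $M(\arr)$ has the homotopy type of a finite $2$-complex and that there are no higher cells to dispose of. I would then collapse a spanning tree in the $1$-skeleton of $\mathrm{Sal}(\arr)$ to obtain a $2$-complex with a single $0$-cell --- an operation that preserves homotopy type --- and read off the resulting group presentation. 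The remaining work is to identify this presentation, up to the Tietze transformations of Lemma~\ref{tietze}, with the Arvola--Randell presentation $\mathcal{P}$, after which $|\mathcal{P}|\simeq\mathrm{Sal}(\arr)\simeq M(\arr)$ follows.

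A second, more classical route --- the one underlying the cited results of Falk and of Cohen--Suciu --- replaces Salvetti's complex by Libgober's theorem that the complement of any plane curve is homotopy equivalent to a complex built from the braid-monodromy presentation complex by attaching cells of dimension $\ge 3$, together with the Andreotti--Frankel theorem that a smooth affine variety of complex dimension two has the homotopy type of a CW complex of dimension at most two. One then combines these with the Orlik--Solomon computation of $H_*(M(\arr))$ to check that $f$ is an isomorphism on homology, and passes to universal covers so that Whitehead's theorem upgrades the $\pi_1$-isomorphism to a homotopy equivalence.

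The main obstacle is the reconciliation in either route. In the Salvetti route it is the bookkeeping that matches the cellular boundary maps of the $2$-cells sitting over each multiple point of $\arr$ with the precise string of conjugation and commutation relators of the Arvola--Randell presentation; this is where the local combinatorics of the arrangement enters and is the technical heart. In the braid-monodromy route it is subtler: a $2$-complex that becomes homotopy equivalent to $M(\arr)$ after attaching cells of dimension $\ge 3$ need not be homotopy equivalent to $M(\arr)$ merely because $M(\arr)$ is homotopy two-dimensional --- attaching such cells can genuinely change the homotopy type even when the resulting space collapses back to low dimension (witness $S^2\cup e^3\simeq \mathrm{pt}$) --- so one must verify, using the homology of the universal cover and the efficiency of the presentation, that the attaching maps of the higher cells carry no homotopy and introduce no spurious $S^2$ summands. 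That verification is exactly the content that makes the theorem nontrivial.
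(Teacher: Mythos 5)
Be aware first that the paper offers no proof of Theorem \ref{ht} to compare against: it is imported by citation, and the two cited sources are precisely your two routes. Your Salvetti-complex argument is Falk's proof of Theorem 2.3 in \cite{Falk-homotopy-types-MR1193601} (note this route only covers complexified-real arrangements, which is all the paper needs), and your braid-monodromy argument is the one behind Corollary 6.5 in \cite{CS-BraidMonodromy-MR1470093}, which rests on Libgober's theorem. So your strategy is well chosen, and you deserve credit for isolating the real subtlety: a $\pi_1$-isomorphism between two spaces, each homotopy equivalent to a finite 2-complex, need not be a homotopy equivalence, as your $S^2\cup e^3$ example shows.

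The gap is that both routes stop exactly where that subtlety must be confronted, and route 2 in addition misquotes its key input in a way that makes it circular. Libgober's theorem is not that $M(\arr)$ is ``the braid-monodromy presentation complex with cells of dimension $\ge 3$ attached''; it is precisely the assertion that $M(\arr)$ is homotopy equivalent to that 2-complex, with no higher cells. Starting only from the weak form you state, the patch you propose cannot be completed as described: Whitehead's theorem requires an isomorphism on homology of \emph{universal covers} (equivalently, on all homotopy groups), whereas Orlik--Solomon computes $H_*(M(\arr))$ itself, and since arrangement complements are generally not aspherical these are genuinely different pieces of data. Verifying that the higher cells attach trivially at the level of universal covers --- that no spurious $S^2$ summands appear --- is exactly the content of Libgober's theorem, so what you would be assuming is what you must prove. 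Likewise in route 1, the deferred ``bookkeeping'' --- showing that the collapsed Salvetti complex is connected to $\vert \mathcal{P} \vert$ by Tietze moves of types I--III only (type IV is precisely the move that wedges on copies of $S^2$ and must be excluded, cf.\ Lemma \ref{tietze}) --- is the entire content of Falk's proof. A complete argument must actually carry out one of these two verifications; as it stands, the proposal is an accurate map of the literature rather than a proof.
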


A smooth 1-parameter family of arrangements  \( \arr_t \) is defined by a polynomial \[
Q(\arr_t) = \prod\limits_{j=1}^n \alpha_j(z,t)
\] 
where  \[ \alpha_j(z,t) = \alpha_{j1}(t)z_1 + \alpha_{j2}(t) z_2 + \cdots + \alpha_{jl}(t) z_l  \]
and each  \( \alpha_{jm}: \mathbb{R} \to \mathbb{C} \) is a smooth function. For any value of  \( t_0 \in \mathbb{R} \), the notation  \( \arr_{t_0} \) is an arrangement.

The family is a lattice isotopy if for all values  \( t_1,t_2 \in \mathbb{R} \) the arrangements  \( \arr_{t_1} \) and  \( \arr_{t_2} \) have isomorphic lattices.

\begin{theorem}\label{diff}(Main Theorem in \cite{Randell-isotopy})
  Let  \( \arr_t \) be a lattice isotopy. Then  \( M(\arr_0) \) is diffeomorphic to  \( M(\arr_1). \)
\end{theorem}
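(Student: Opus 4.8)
The plan is to prove Theorem~\ref{diff} by realizing the lattice isotopy as an \emph{ambient} isotopy of the whole space. Writing $H_j(t) = \{z : \alpha_j(z,t) = 0\}$ for the hyperplanes of $\arr_t$, I will construct a smooth family of diffeomorphisms $\Phi_t \colon \CC^l \to \CC^l$, $t \in [0,1]$, with $\Phi_0 = \mathrm{id}$ and $\Phi_t\bigl(H_j(0)\bigr) = H_j(t)$ for every $j$. Such a $\Phi_t$ carries the union of hyperplanes onto the union of hyperplanes bijectively, hence carries complement to complement, so $\Phi_1$ restricts to a diffeomorphism $M(\arr_0)\to M(\arr_1)$, which is exactly the assertion. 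Everything thus reduces to producing the isotopy $\{\Phi_t\}$, and nothing about homotopy type or Theorem~\ref{diff} itself is invoked.

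I will obtain $\Phi_t$ by integrating a smooth time-dependent vector field $V_t$ on $\CC^l \cong \RR^{2l}$, defining $\Phi_t$ by $\tfrac{d}{dt}\Phi_t = V_t\circ\Phi_t$ with $\Phi_0 = \mathrm{id}$. The requirement $\Phi_t(H_j(0)) = H_j(t)$ is equivalent to asking that $\alpha_j(\Phi_t(z),t)$ stay constant in $t$ for $z \in H_j(0)$; differentiating and using that each $\alpha_j$ is affine-linear in $z$ with coefficients smooth in $t$, this becomes the single pointwise linear condition
\[
(\partial_z\alpha_j)\cdot V_t + \partial_t\alpha_j = 0 \qquad \text{along } H_j(t).
\]
So I must produce a globally smooth $V_t$ satisfying one such linear condition on each moving hyperplane simultaneously, and with this in hand the flow delivers the desired ambient isotopy.

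I will build $V_t$ by a local-to-global patching argument. Away from all hyperplanes one may take $V_t = 0$; near a generic point of a single hyperplane the lone linear condition on $V_t$ is trivially solvable. The delicate step is at the multiple points, where several hyperplanes meet and a \emph{single} vector at the intersection must satisfy the prescribed condition for every hyperplane through it. This is precisely where the hypothesis enters: because $L(\arr_t)$ is independent of $t$, each flat $X_t = \bigcap_i H_{j_i}(t)$ persists and moves smoothly in $t$ without ever merging with, crossing, or changing incidence relative to another flat, so its velocity $\dot X_t$ simultaneously solves all the conditions imposed by the hyperplanes containing it — the system is consistent exactly because no new coincidences appear. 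I then cover $\CC^l$ by neighborhoods of the (finitely many) flats together with the open set off the arrangement, solve the condition locally on each piece, and glue with a partition of unity; since the constraints are affine, any convex combination of admissible local fields is still admissible, so the glued $V_t$ remains adapted to every hyperplane. Finally, affine-linearity of the $\alpha_j$ lets me take $V_t$ with at most linear growth in $z$, so its flow is complete on all of $[0,1]$.

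The main obstacle is exactly this compatibility at multiple points: one must verify carefully that constancy of the intersection lattice is the correct hypothesis guaranteeing that the per-hyperplane velocity prescriptions never conflict, equivalently that the flats evolve equisingularly, so that a single globally smooth adapted vector field exists. An alternative packaging of the same idea, which I would mention, is to regard $\mathcal{E} = \{(z,t) : Q(\arr_t)(z)\neq 0\}$ as the total space of the projection $p\colon\mathcal{E}\to[0,1]$ with fibers $M(\arr_t)$; lattice constancy makes the stratification of $\CC^l\times[0,1]$ by flats a Whitney equisingular family on which $p$ is a stratified submersion, and a controlled (complete) vector field trivializes $p$ over the contractible base $[0,1]$, again yielding $M(\arr_0)\cong M(\arr_1)$.
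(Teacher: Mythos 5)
You should first be aware that the paper contains no proof for you to match: Theorem~\ref{diff} is imported verbatim as the Main Theorem of \cite{Randell-isotopy}, so the relevant comparison is with Randell's published argument --- which is exactly the stratification-theoretic proof you relegate to your final ``alternative packaging'' paragraph. Your primary plan, by contrast, has a genuine gap, and it is not a repairable technicality: the globally smooth adapted field $V_t$ you want to build generally \emph{does not exist}, so no amount of care in the patching step can produce it. Concretely, consider the lattice isotopy of four concurrent lines given by $Q(\arr_t)=xy(y-x)\bigl(y-\lambda(t)x\bigr)$ with $\lambda(t)=2+t$. If $\Phi_t$ were a smooth ambient isotopy with $\Phi_t\bigl(H_j(0)\bigr)=H_j(t)$, then $\Phi_1$ would fix the origin (the unique point lying on all four lines) and $D\Phi_1(0)$ would be a real-linear isomorphism of $\CC^2$ carrying each complex line $H_j(0)$ onto $H_j(1)$. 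After composing with complex-linear maps that put the first three slopes at $0,\infty,1$ on both ends, such a map must have the form $(x,y)\mapsto\bigl(f(x),f(y)\bigr)$ with $f$ a real-linear isomorphism of $\CC$, and any such map fixes \emph{every} complex line of real slope; hence it cannot move the fourth slope from $\lambda(0)=2$ to $\lambda(1)=3$. The same obstruction appears infinitesimally: your adapted condition forces the complex-linear part of $DV_t(0)$ to satisfy four interpolation conditions governed by only three free coefficients, which are inconsistent as soon as $\dot\lambda\neq 0$. So for this family there is no $C^1$ adapted vector field near the multiple point, and indeed no diffeomorphism of pairs respecting the labelling at all. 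Your partition-of-unity gluing cannot circumvent this because the failure lives in the $1$-jet of $V_t$ at the flat: cutoff functions separating the hyperplanes through a flat must be essentially scale-invariant near it, hence are not smooth there, and convexity of the constraints is no help against an obstruction that already blocks the linearization.

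This is precisely the phenomenon that the Thom--Mather machinery in your last paragraph is designed to handle, and that route is the correct one (and is Randell's): controlled vector fields are smooth on each stratum but need not be continuous across strata, so their flows are stratum-preserving \emph{homeomorphisms} of the ambient space that restrict to \emph{diffeomorphisms} of each stratum. Stratify a compactification of the total space $\bigl\{(z,t) \in \CC^l\times[0,1]\bigr\}$ by the flats of the family (passing to $\CP^l\times[0,1]$, with the hyperplane at infinity added, to make the projection to $[0,1]$ proper); constancy of the lattice is what guarantees that this is a Whitney stratified set on which the projection is a proper stratified submersion; the first isotopy lemma then trivializes the family topologically, and since $\bigcup_t M(\arr_t)\times\{t\}$ is the open stratum, the trivialization restricts there to a diffeomorphism $M(\arr_0)\cong M(\arr_1)$. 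Note that the statement of the theorem itself encodes this dichotomy --- Randell gets homeomorphism of the ambient pairs but asserts diffeomorphism only of the complements --- which is a signal that your ambient-smooth-isotopy strategy is too strong to be true. You should discard the primary plan and develop the final paragraph into the actual proof.
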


\section{Proof of Main Results}
%%%%%%%%%%%%%%%%%%%%%%%%%
%
% Presentation for pencil of lines part of arrangements, A
%
%
%%%%%%%%%%%%%%%%%%%%%%%%%%%%%%%%

We begin with  Lemma \ref{present} that allows us to construct a complex that is homotopy equivalent to the arrangement  \( \baseHOne \) from Theorem \ref{main}.

\begin{lemma}\label{present}
  Let  \( \baseHOne \) be as described in Theorem \ref{main}. Then, \( M(\baseHOne)\) is homotopy equivalent to the complex constructed from the presentation given by 
\[
\left\langle\begin{matrix}h_1,\dots, h_n, \\  l_2,\dots, l_m \end{matrix}  ~\left\lvert~ \begin{matrix} [l_k, h_j]  & \textrm{ for } 2 \leq k \leq m\\ [h_1,l_m,\dots,l_2] & 2 \leq j \leq n \\ R_h & ~\\ \end{matrix} \right. \right\rangle
\]
  where \(\left\langle  h_1,\dots, h_{n},    \vert   R_h  \right\rangle \) is a Randell-Arvola presentation for the fundamental group of \( M(\baseArr) \) and  \( R_h \) consists of words in the generators  \( \{h_1,\dots, h_n\} \) only. 
\end{lemma}
\begin{proof}
  We begin by constructing a series of lattice isotopies to find an arrangement whose complement is diffeomorphic to the complement of  \( \baseHOne, \) but has a structure whose Randell-Arvola presentation is easier to construct.

  We choose coordinates \( \mathbb{C}[x,y] \) so that the point of intersection of the lines in  \( \mathcal{C}  \) is the origin and the line  \( H_1  \) is defined by the kernel of  \( \alpha_{H_1}=x.\) In this way, we can give a defining polynomial for  \( \mathcal{C} \) by  \( Q(\mathcal{C})=x\cdot \prod\limits_{k=2}^m(y-m_k x) \) and a defining polynomial for  \( \baseArr \) by  \( Q(\baseArr) = x \cdot \prod\limits_{j=2}^n (y-w_j x + a_j) \) where  
\begin{itemize}
\item \( m_p < m_q \) for  \( p <  q \), 
\item \( w_j \neq m_k \) for all  \( 2\leq j\leq n, 2 \leq k \leq m, \) and
\item  \( a_j\neq 0 \) for all  \( 2 \leq j \leq n. \)
\end{itemize}Thus, a defining polynomial for  \( \baseHOne \) is given by  \( Q(\baseHOne) = x\cdot \prod\limits_{k=2}^m(y-m_k x) \cdot \prod\limits_{j=2}^n (y+w_j x + a_j).\)

We can define a two parameter family of arrangements  \( \arr(t,s) \) for   \( s,t \in \mathbb{R} \) by the polynomial \[
  H(\arr(t,s))= x\cdot \prod\limits_{k=2}^m(y-m_k x - s\cdot t + (t-t^2)i) \cdot \prod\limits_{j=2}^n (y-w_j x + a_j).
  \]  Note that the arrangement  \( \baseArr \) is a subarrangement of  \( \arr(t,s) \) for all  \( s \) and  \( t. \) We define a family of subarrangements by \( \mathcal{C}(t,s) \) from the polynomial  \[Q(\mathcal{C}(t,s))=  x\prod\limits_{k=2}^m(y-m_k x - s\cdot t + (t-t^2)i).\] As  \( \baseArr \) is a finite arrangement, there exists a real number \( S>0 \) so that all multiple points of  \( \baseArr\) are inside a ball of radius  \( S \) centered at the origin. Also, we can choose an  \( R>S \) so that all points of intersection of  \( \baseArr \) and  \( \mathcal{C}(1,R) \) take place outside of ball of radius  \( R-1 \) centered at the origin. 

Therefore, we may define a 1-parameter family of arrangements by \[
  H(t,R)= x\cdot \prod\limits_{k=2}^m(y-m_k x - R\cdot t + (t-t^2)i) \cdot \prod\limits_{j=2}^n (y-w_j x + a_j).
  \] One may see that  \( \arr(0,R)\) is a defining polynomial for  \( \baseHOne \), and that  \( H(t,R) \) is smooth in  \( t. \) Finally, one may see that the family preserves the intersection lattice, thus defines an isotopy of arrangements. Therefore, by  Theorem \ref{diff}, we have that  \( \baseHOne \) and  \( \arr(1,R)\) define arrangements with diffeomorphic complements. 

  Next, we define another two-parameter family of arrangements for  \( t,q \in \mathbb{R} \) by \[
  G(t,q) = x\cdot (y-m_2x - R)) \cdot \prod\limits_{k=3}^m(y-M_k(t,q) x - R) \cdot \prod\limits_{j=2}^n (y-w_j x + a_j).
  \] where  \( M_k(t,q) = \left[\left( \left(\dfrac{q-m_2}{m} k+m_2\right) t  + m_k (1-t)\right) + (t-t^2) i \right] \) for  \( 3 \leq k \leq m. \) One may see that  \( G(0,q) \) is a defining polynomial for  \( \arr(1,R). \)

  Let  \( L_2 \) denote the hyperplane defined by the kernel of   \( \alpha_{L_2} = y-m_2x -R\).
%and let  \( L_k(t,q) \) be the hyperplane defined by the kernel of \( \alpha_{L_k} =y-M_k(t,q) x - R \) for  \( 3 \leq k \leq m \). 
%not needed? Let \( H_j \) be the hyperplane defined by the kernel of \(\alpha_{H_j} = y-w_j x + a_j\) for  \( 2 \leq j \leq n \).
 Let  \( H_1 \) be the hyperplane defined by the kernel of  \( \alpha_{H_1} = x. \) We then have that  \( \baseArr = \{H_1, \cdots, H_n\}. \) Let  \( R^+ = \{(x,y) \in \mathbb{R} | x >0 \}\) be the half-plane with positive  \( x-\)values and let  \( R^+ = \{(x,y) \in \mathbb{R} | x <0 \}\) denote the half-plane with negative  \( x-\)values.

We may partition the set of planes  \( \baseArr \setminus \{H_1\}\) into two sets by defining  \[ \baseHOne^+ = \{ H \in \baseArr  | H \cap L_2 \in R^+ \},  \baseHOne^- =  \{ H \in \baseArr  | H \cap L_2 \in R^- \}. \] Let  \( L(q) \) be the complex line defined by the polynomial  \( y-qx - R.\) Let  \( W \in \mathbb{R} \) be chosen so that  
\begin{itemize}
\item \( W > m_2 \) and
\item  for all  \( W\geq p>m_2 \) we have \( \{  H \in \baseHOne^+ | H \cap L(p) \in R^+\} = \baseHOne^+,\) and \( \{  H \in \baseHOne^- | H \cap L(p) \in R^-\} = \baseHOne^-.\) 
\end{itemize}
Essentially,  \( W \) is chosen so that for all lines of the form  \( y-px-R=0, \) the real part of the line with positive  \( x\)-values intersects the arrangement  \( \baseArr \) in the same set of lines as  \( \baseHOne^+. \)

Returning to the family of arrangements, we now have the 1-parameter family given by  \( G(t,W) \) which is a smooth family of arrangements. We leave to the reader to show that the isotopy preserves the intersection lattice. Therefore,  \( G \) defines a lattice isotopy from the arrangement  \( \arr(1,R) \) defined by  \( G(0,W) \) and the arrangement defined by  \( G(1,W). \) Thus, the arrangement  \( \baseHOne \) is lattice isotopic to the arrangement defined by  \( G(1,W). \)

By re-indexing the set of lines  \( \{H_2, \dots, H_n\} \) we will denote the set of lines in  \( \baseHOne^+=\{H_2, \dots, H_v\} \) for some  \( 2 \leq v \leq n \) where  \( 2 \leq p < q \leq v \) indicates that the distance from the point  \( (0,R) \) to  \( H_p \cap L_2 \) is less than the distance from  \( (0,R) \) to the point  \( H_q \cap L_2. \) Similarly, we re-index so that  \( \baseHOne^- = \{H_{v+1}, \dots, H_n\} \) where again we index by increasing distance from the point  \( (0,R) \) to the point  \( H_p \cap L_2. \) We note that either of these sets may be empty. 

 \begin{figure}[h!]
 \begin{center} 

\includegraphics[width=5in]{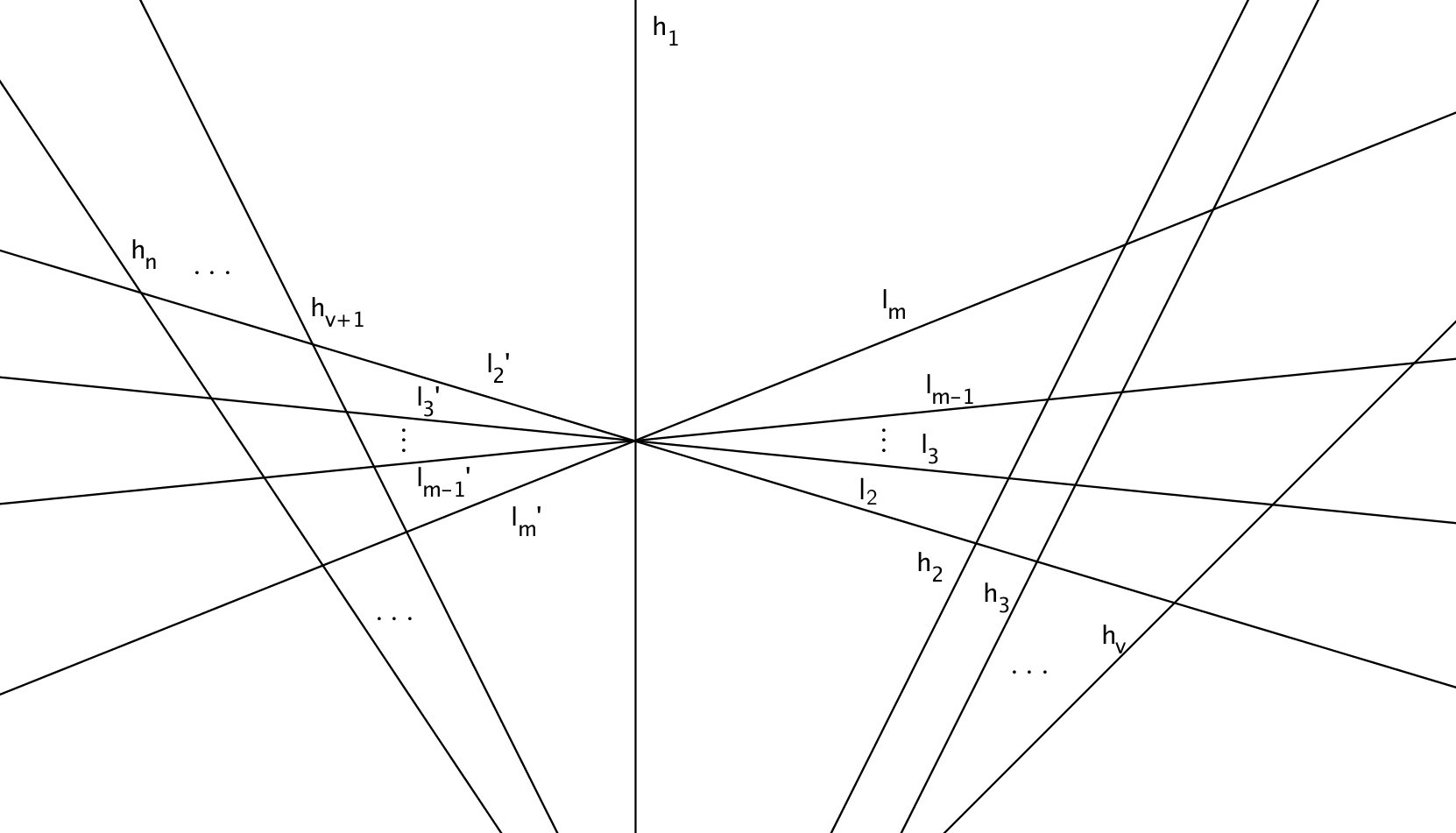}

 		\end{center}
                \caption{The pencil of lines moved ``outside'' of the arrangement  \( \baseArr. \)}
 											\label{pencil}
 					\end{figure}

This reindexing allows us to consider the arrangement of lines pictured in Figure \ref{pencil}. This is the local picture for the arrangement defined by \( G(1,W) \) We denote the lines defined by  \( y-M_k(1,W)x-R \) by \( L_k.\) We may associate to each line  \( L_k \) two generators in the presentation of the fundamental group for the Randell-Arvola presentation. We will denote by  \( l_k \) the generator in  \( R^+ \) and by the  \( l_k' \) the generator in  \( R^-. \) We may also associate to each line  \( H_j \) a generator  \( h_j \)   by choosing the generators to be given by the edge coming from the intersections of  \( H_j \) with  \( L_2. \) Note that since the intersection of \( H_j \) and  \( L_2 \) consists of a point of multiplicity two, by the Randell-Arvola presentation we may use the same generator for both sides of the intersection point.

We will now change the presentation without affecting the homotopy type of resulting 2-complex. (See Section \ref{ssec:complex}). We first note that we have the relations induced by the multiple point at  \( (0,R) \) which give rise to conjugation relators \[
l_2'l_2^{-1},   l_3' (l_3^{l_2})^{-1}, \dots, l_m'(l_m^{l_{m-1}\cdots l_2})^{-1}, h_1'h_1^{-1}
\]
and the commutation relators \[
[h_1,l_m, \dots, l_2]
\]

By choosing the generators in this way, we have the relators along the line  \( L_2 \) consisting of the set \[
V = \{ [l_2, h_k], [l_2',h_j]  \mid H_k \in \baseHOne^+, H_j \in \baseHOne^-\}.
\]
By using the first conjugation relator ( \( l_2'l_2^{-1}\)), we may rewrite the  relators in the set  \( V \) by a Tietze-II move  as \[
\{ [l_2, h_k] \mid 2 \leq k \leq n \}.
\]  

Along the line  \( L_3, \) we have the following relators: \[
\{ [l_3, h_k], [l_3',h_j]  \mid H_k \in \baseHOne^+, H_j \in \baseHOne^-\}.
\] The relator  \(  [l_3',h_j] \) may be rewritten as  \( [l_3^{l_2},h_j] \) by using the conjugation relator  \(l_3' (l_3^{l_2})^{-1}\) and a Tietze-II move. This new relator may be rewritten as  \( A=l_2^{-1}l_3l_2 h_j l_2^{-1}l_3^{-1}l_2 h_j^{-1}.\) Since we have the commutator relator  \( [l_2,h_j]\), the relator  \( A \) may be rewritten as  \( B=l_2^{-1}l_3h_jl_2  l_2^{-1}l_3^{-1} h_j^{-1} \) by a Tietze-III move and may be further rewritten as  \( l_3h_jl_3^{-1}h_j^{-1}=[l_3,h_j] \) via Tietze-I moves. Therefore, the set of relators from the line  \( L_3 \) may be rewritten as \[
\{ [l_3, h_k],  \mid 2 \leq k \leq n\}.
\]
This process may be repeated  for  all \( L_k \) until we have a presentation of \(  \pi_1(M(\baseHOne)) \) given by \[
\left\langle\begin{matrix}h_1,\dots, h_n, \\  l_2,\dots, l_m \end{matrix}  ~\left\lvert~ \begin{matrix} [l_k, h_j]  & \textrm{ for } 2 \leq k \leq m\\ [h_1,l_m,\dots,l_2] & 2 \leq j \leq n \\ R_h & ~\\ \end{matrix} \right. \right\rangle
\]
where  \( R_h \) consists of the relators coming from the arrangement  \( \baseHOne \) induced by the generators  \( h_1, \dots, h_n. \) \end{proof}

%%%%%%%%%%%%%%%%%%%%%%%%%%%%%%%%%%%%%%%%%%%%%%%%%%%%%%%%%%%%%%%%%%%%%%%%
%%%%%%%%%%%%%%%%%%%%%%%%%%%%%%%%%%%%%%%%%%%%%%%%%%%%%%%%%%%%%%%%%%%%%%%%
%
%
%  Lemma - any two B-type arrangements have diffeomorphic complements
%
%
%%%%%%%%%%%%%%%%%%%%%%%%%%%%%%%%%%%%%%%%%%%%%%%%%%%%%%%%%%%%%%%%%%%%%%%%

\begin{lemma}\label{diffeo}

  Let  \( \mathcal{B}_1 \) and  \( \barr_2 \) be  complexified real arrangements in  \( \mathbb{C}^2 \) such that \begin{itemize}
  \item the arrangements  \( \mathcal{B}_i \) are each the union of two subarrangements  \( \baseArr \) and  \( \barr_i' \) that have no lines in common,
  \item  \( \barr_i' \) is a set of  \( m \geq 1 \) parallel lines
  \item every line in  \( \barr_i' \) intersects each line line of  \( \baseArr \) in a point of multiplicity two.
  \end{itemize}
  Then  \( \barr_1 \) and  \( \barr_2 \) have diffeomorphic complements.
\end{lemma}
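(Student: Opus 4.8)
The plan is to connect $\barr_1$ and $\barr_2$ by a lattice isotopy that keeps the common subarrangement $\baseArr$ fixed, and then to invoke Theorem \ref{diff}. I would encode the $m$ parallel lines that are adjoined to $\baseArr$ as a point of the parameter space
\[
\mathcal{M} = \{(a;b_1,\dots,b_m) \in \CC \times \CC^m : b_i \neq b_j \text{ whenever } i \neq j\},
\]
where such a point corresponds to the family of lines $\{\, y = a\,x + b_i \,\}_{i=1}^{m}$. After a linear change of coordinates (which does not affect the diffeomorphism type of either complement) I may assume that neither $\barr_1'$ nor $\barr_2'$ consists of vertical lines, so that both arrangements are recorded by honest points $p_1,p_2 \in \mathcal{M}$. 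As the complement of the diagonal hyperplanes in $\CC^{m+1}$, the space $\mathcal{M}$ is a connected complex manifold.

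First I would isolate the ``bad'' locus $\mathcal{Z}\subset\mathcal{M}$ of parameters for which $\baseArr$ together with the associated parallel lines fails to have the expected lattice. This happens exactly when (i) $a$ equals the slope of some line of $\baseArr$, so that one of the adjoined lines is affinely parallel to a base line (and coincidence with a base line is a further subcase of this), or (ii) some line $y = a\,x + b_i$ passes through a point $(p,q)$ lying on at least two lines of $\baseArr$, i.e.\ $b_i = q - a\,p$. Each of these is a proper complex-algebraic condition of complex codimension at least one in $\mathcal{M}$, so their union $\mathcal{Z}$ is a proper complex-analytic subset. On the complementary open set $\mathcal{U} = \mathcal{M}\setminus\mathcal{Z}$ the intersection lattice is \emph{constant}: the incidences inside $\baseArr$ never change, each adjoined line meets each base line in exactly one double point, distinct parallel lines meet each base line in distinct points, and the parallel lines are pairwise disjoint in $\CC^2$ so they contribute no further intersections. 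By the hypotheses on $\barr_1$ and $\barr_2$, both $p_1$ and $p_2$ lie in $\mathcal{U}$.

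Finally, since $\mathcal{Z}$ has real codimension at least two in the connected manifold $\mathcal{M}$, the open set $\mathcal{U}$ is path-connected, and I may choose a smooth (indeed real-analytic) path $\gamma\colon[0,1]\to\mathcal{U}$ from $p_1$ to $p_2$. Writing out the corresponding one-parameter family of defining polynomials, with the fixed factors for $\baseArr$ multiplied by $\prod_{i=1}^{m}\bigl(y - a(t)\,x - b_i(t)\bigr)$ where $(a(t);b_1(t),\dots,b_m(t)) = \gamma(t)$, yields a smooth family of arrangements in the sense of Section~\ref{sec:background} whose lattice is constant, hence a lattice isotopy with $\arr_0 = \barr_1$ and $\arr_1 = \barr_2$. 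Theorem~\ref{diff} then gives that $M(\barr_1)$ and $M(\barr_2)$ are diffeomorphic.

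The step I expect to be the crux is the codimension count together with the observation that it succeeds only because we are free to deform over $\CC$. The decisive structural feature is that the adjoined lines are mutually parallel and so never intersect one another: no new incidence among the moving lines can ever appear or vanish during the deformation, and the only events one must avoid are a moving line striking a multiple point of $\baseArr$ or acquiring a base-line slope, both of which are complex-codimension-one and therefore dodgeable by a generic complex path. Were one restricted to real coefficients, the analogous forbidden locus could have real codimension one and disconnect the parameter space, which is exactly the rigidity the present setup circumvents.
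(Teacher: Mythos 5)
Your proof is correct, and it rests on the same pillar as the paper's: exhibit a lattice isotopy from \( \barr_1 \) to \( \barr_2 \) that fixes \( \baseArr \), then invoke Theorem \ref{diff}. The way you produce the isotopy, however, is genuinely different. The paper writes down three explicit one-parameter families: \( G_1 \) slides the lines of \( \barr_1' \) out past a half-plane containing every multiple point of \( \baseArr \), \( G_2 \) does the same for \( \barr_2' \), and \( H \) then interpolates the slope \( u_1 \) to \( u_2 \); in each family a term \( (t-t^2)i \) pushes the moving lines off the real picture so that no incidence can degenerate at intermediate times, and the lattice-preservation of each family is checked (or left to the reader) by inspection. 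You instead parametrize all admissible families of \( m \) parallel lines by the manifold \( \mathcal{M} \), observe that the degenerate parameters form a finite union of complex hypersurfaces (slope matching a slope in \( \baseArr \), or an adjoined line passing through a multiple point of \( \baseArr \)), and obtain the isotopy from path-connectedness of the complement of a proper complex-analytic subset, the adjoined lines never meeting one another because they share the slope coordinate \( a(t) \). Your closing observation --- complex codimension one means real codimension two, which is what makes the dodge possible --- is precisely the idea that the paper's \( (t-t^2)i \) terms implement by hand. What your route buys is economy and robustness: one genericity argument replaces three explicit constructions and their case-by-case lattice verification. What the paper's route buys is reusability: the same explicit "move the lines outside \( \baseArr \)" technique is needed again in Lemma \ref{present}, where the specific terminal configuration matters and a merely generic path would not produce it. Two minor points to tidy in your write-up: the paper's definition of lattice isotopy takes families over all of \( \RR \), so extend your path constantly beyond \( [0,1] \) (or reparametrize); and the single linear change of coordinates must be chosen to make \emph{both} \( \barr_1' \) and \( \barr_2' \) non-vertical simultaneously, which is possible since only finitely many directions must be avoided.
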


\begin{proof} We will proceed by finding lattice isotopies that connect these two arrangements. 

  As in the proof of Lemma \ref{present} we may choose coordinates so that the line \( H_1 \in \baseArr  \) is defined by the kernel of  \( f(x,y)=x.\) In this way, we can give a  defining polynomial for  \( \baseArr \) by  \( Q(\baseArr) = x \cdot \prod\limits_{j=2}^n (y-w_j x + a_j) \)  and for  \( \barr_1' \) by  \( Q(\barr_1') = \prod\limits_{k=1}^{m}(y-u_1x-b_k)\) where  \( b_p < b_q \) for  \( p <  q \),  and  \( u_1 \neq w_j \) for all  \( 2\leq j\leq n\). Thus, a defining polynomial for  \( \barr_1 \) is given by  \( Q(\barr_1) = x\cdot \prod\limits_{k=1}^{m}(y-u_1x-b_k)  \cdot \prod\limits_{j=2}^n (y-w_j x + a_j).\)  

  Similarly, we construct a defining polynomial for  \( \barr_2 \) by \( Q(\barr_2) = x\cdot \prod\limits_{k=1}^{m}(y-u_2x-d_k)  \cdot \prod\limits_{j=2}^n (y-w_j x + a_j),\) where  \( d_p  < d_q \) for  \( p<q \), and \( u_2 \neq w_j \) for  \( 2 \leq j \leq n \).

Choose  \( S>0 \) so that all multiple points of  \( \baseArr \) occur in the half-plane defined by  \( y-u_1x-S \leq 0 \) and in the half plane defined by  \( y-u_2 x - S\leq 0. \) Then define the following 1-parameter family of arrangements:  \[
  G_1(t) = x\cdot \prod\limits_{k=1}^{m}(y-u_1x-(b_k(1-t) + (S+k)t + (t-t^2)i)  \cdot \prod\limits_{j=2}^n (y-w_j x + a_j).
  \] We note how this family leaves the subarrangement  \( \baseArr \) unchanged and how  \( G_1(0) \) is a defining polynomial for  \( \barr_1.\) We leave to the reader to verify that  \( G_1 \) is a lattice isotopy.  In the same way, define the lattice isotopy  \[
  G_2(t) = x\cdot \prod\limits_{k=1}^{m}(y-u_2x-(d_k(1-t) + (S+k)t + (t-t^2)i)  \cdot \prod\limits_{j=2}^n (y-w_j x + a_j).
  \] with  \( G_2(0) \) a defining polynomial for  \( \barr_2. \)

  Finally, we define a lattice isotopy that connects the arrangement defined by \( G_1(1) \) to the arrangement defined by  \( G_2(1) \) by  \[
  H(t) = x\cdot \prod\limits_{k=2}^{m}(y-(u_1(1-t) + u_2t + i(t-t^2))x-(S+k)   \cdot \prod\limits_{j=2}^n (y-w_j x + a_j).
  \] Again, as  \( H \) leaves the subarrangement  \( \baseArr \) unchanged, there is little to check to confirm that  \( H \) defines a lattice isotopy of the arrangement defined by  \( G_1(1) \) to the arrangement  \( G_2(1). \)

  Therefore, we have that  \( \barr_1 \) is lattice isotopic to  \( \barr_2 \), therefore \( M(\barr_1) \) is diffeomorphic to  \( M(\barr_2) \).
\end{proof}

%%%%%%%%%%%%% OK 

%%%%%%%%%%%%%%%%%%%%%%%%%%%%%%%%%%%%%%%%%%%%%%%%%%%%%%%%%%%%%%%%%%%%%%%%
%%%%%%%%%%%%%%%%%%%%%%%%%%%%%%%%%%%%%%%%%%%%%%%%%%%%%%%%%%%%%%%%%%%%%%%%
%
%
%  Proof of Main Theorem
%
%
%%%%%%%%%%%%%%%%%%%%%%%%%%%%%%%%%%%%%%%%%%%%%%%%%%%%%%%%%%%%%%%%%%%%%%%%

\subsection{Proof of Theorem  \ref{main} and Theorem \ref{arb}}

\begin{proof} \textit{of Theorem \ref{main}} From Lemma \ref{present}, we have a presentation that generates a 2-complex homotopy equivalent to \(  M(\baseHOne) \) given by 
\[
\left\langle\begin{matrix}h_1,\dots, h_n, \\  l_2,\dots, l_m \end{matrix}  ~\left\lvert~ \begin{matrix} [l_k, h_j]  & \textrm{ for } 2 \leq k \leq m\\ [h_1,l_m,\dots,l_2] & 2 \leq j \leq n \\ R_h & ~\\ \end{matrix} \right. \right\rangle
\]
where  \( R_h \) consists of the relators coming from the arrangement  \( \baseArr \) induced by the generators  \( h_1, \dots, h_n. \)

%We know from the work of Falk \cite{Falk-homotopy-types-MR1193601} that as this presentation was achieved via the use of the Randel-Arvola presentation and Tietze transformations of types I,II, and II, that the standard 2-complex modeled on the presentation will be homotopy equivalent to  \( \baseHOne. \) 

We perform a final sequence of Tietze transformations to arrive at a preferred presentation. We begin with a Tietze II move by introducing the generator  \( g\) and the relation  \( g=h_1l_m\cdots l_2. \) We then perform another Tietze II move by  deleting the relation  \( gl_2^{-1}\cdots l_m^{-1}=h_1 \) and replace all occurrences of  the generator \( h_1 \) with  \( gl_2^{-1}\cdots l_m^{-1} \). We first examine the consequences on the commutator relations coming from  \(  [h_1,l_m, \dots, l_2].\) Expanding this set of relations and applying the Tietze transformations yields the altered relations:
\begin{align*}
h_1l_m \cdots l_2 &= l_m \cdots l_2 h_1  & \rightarrow & g = l_m \cdots l_2 (gl_2^{-1}\cdots l_m^{-1})   = l_m \cdots l_2 gl_2^{-1}\cdots l_m^{-1} \\
h_1l_m \cdots l_2 &= l_{m-1} \cdots l_2 h_1 l_m & \rightarrow & g = l_{m-1} \cdots l_2 (gl_2^{-1}\cdots l_m^{-1}) l_m = l_{m-1} \cdots l_2 gl_2^{-1}\cdots l_{m-1}^{-1}) \\
\vdots &  \rightarrow & &  \vdots   \\
h_1l_m \cdots l_2  &= l_3l_2h_1l_m \cdots l_4 & \rightarrow & g = l_3 l_2 (gl_2^{-1}\cdots l_m^{-1}) l_m \cdots l_4 = l_3l_2 g l_2^{-1}l_3^{-1} \\
h_1l_m \cdots l_2   &= l_2h_1l_m \cdots l_3 & \rightarrow & g = l_2 (gl_2^{-1}\cdots l_m^{-1}) l_m \cdots l_3 = l_2 g l_2^{-1}\\
\end{align*}

These can then be simplified by freely reducing via Tietze-I moves:

\begin{align*}
  g &= l_m \cdots l_2 (gl_2^{-1}\cdots l_m^{-1})  & \rightarrow & g = l_m \cdots l_2 gl_2^{-1}\cdots l_m^{-1} \\
  g &= l_{m-1} \cdots l_2 (gl_2^{-1}\cdots l_m^{-1}) l_m & \rightarrow & g = l_{m-1} \cdots l_2 gl_2^{-1}\cdots l_{m-1}^{-1} \\
&\vdots &  \rightarrow &   \vdots   \\
  g &= l_3 l_2 (gl_2^{-1}\cdots l_m^{-1}) l_m \cdots l_4 & \rightarrow & g = l_3l_2 g l_2^{-1}l_3^{-1} \\
  g &= l_2 (gl_2^{-1}\cdots l_m^{-1}) l_m \cdots l_3 & \rightarrow & g = l_2 g l_2^{-1}\\
\end{align*}
Working from the bottom of the right column we have the relator  \( [g,l_2] \). Applying a Tietze-III transformation to the second column from the bottom, we may transform the relation  \( g=l_3l_2 g l_2^{-1}l_3^{-1} \) to the relator  \( [g,l_3]. \) Continuing this process will yield the set of relators  \( \left\{ [g,l_k] ~|~ 2 \leq k \leq m\right\}. \) 

Therefore, we have the presentation 
\[
\left\langle\begin{matrix}g,\dots, h_n, \\  l_2,\dots, l_m \end{matrix}  ~\left\lvert~ \begin{matrix} [l_k, h_j]  & \textrm{ for } 2 \leq k \leq m\\ [g,l_k] & 2 \leq j \leq n \\ R_h \left(h_1 \mapsto gl_2^{-1}\cdots l_m^{-1} \right) & ~\\ \end{matrix} \right. \right\rangle
\]
where  \( R_h(h_1 \mapsto gl_2^{-1}\cdots l_m^{-1} ) \) consists of the relators coming from the arrangement  \( \baseArr \) induced by the generators  \( h_1, \dots, h_n \) with  \( h_1 \) replace with  \( gl_2^{-1}\cdots l_m^{-1}. \)  As we have  \( l_k \) commutes with  \( g \) and \( h_j \) for all  \( 2 \leq j \leq n \), we can remove all the  \( l_k \) from the relators in  \( R_h(h_1 \to gl_2^{-1}\cdots l_m^{-1} )  \) and replace with the set of relators  \( R_h(h_1 \to g )  \), that is the set of relators coming from the arrangement  \( \baseArr  \) with  \( h_1 \) replaced by  \( g. \)  Thus we arrive at a presentation we will denote by \(\mathcal{P}_{\baseHOne} \): \[
\mathcal{P}_{\baseHOne} := \left\langle \begin{matrix} g,h_2\dots, h_n, \\  l_2,\dots, l_m \end{matrix} ~\left\lvert ~ \begin{matrix} [l_k, h_j] , &  \textrm{ for } 2 \leq k \leq m,   \\ [g,l_k], &2 \leq j \leq n, \\  R_h(h_1 \mapsto g) & ~ \end{matrix} \right. \right\rangle
\]

Consider the arrangement  \( \mathcal{E} \) defined by  \[ Q(\mathcal{E} ) =   x\cdot (y-m_2x - R)) \cdot \prod\limits_{k=3}^m(y-m_2 x - (R+k)) \cdot \prod\limits_{j=2}^n (y-w_j x + a_j).\] Since  the arrangements \( \mathcal{E} \) and  \( \baseAtInfinity \) satisfy the hypotheses of Lemma \ref{diffeo}, these arrangements are lattice isotopic.  From the value chosen for  \( R \) and as the lines in  \( \prod\limits_{k=3}^m (y-m_2 x - (R+k)) \) in  the polynomial \( Q(\mathcal{E})\) are parallel to  \( L_2 \) we may depict the arrangement as in Figure \ref{parallel}.

 \begin{figure}[h!]
 \begin{center} 

\includegraphics[width=5in]{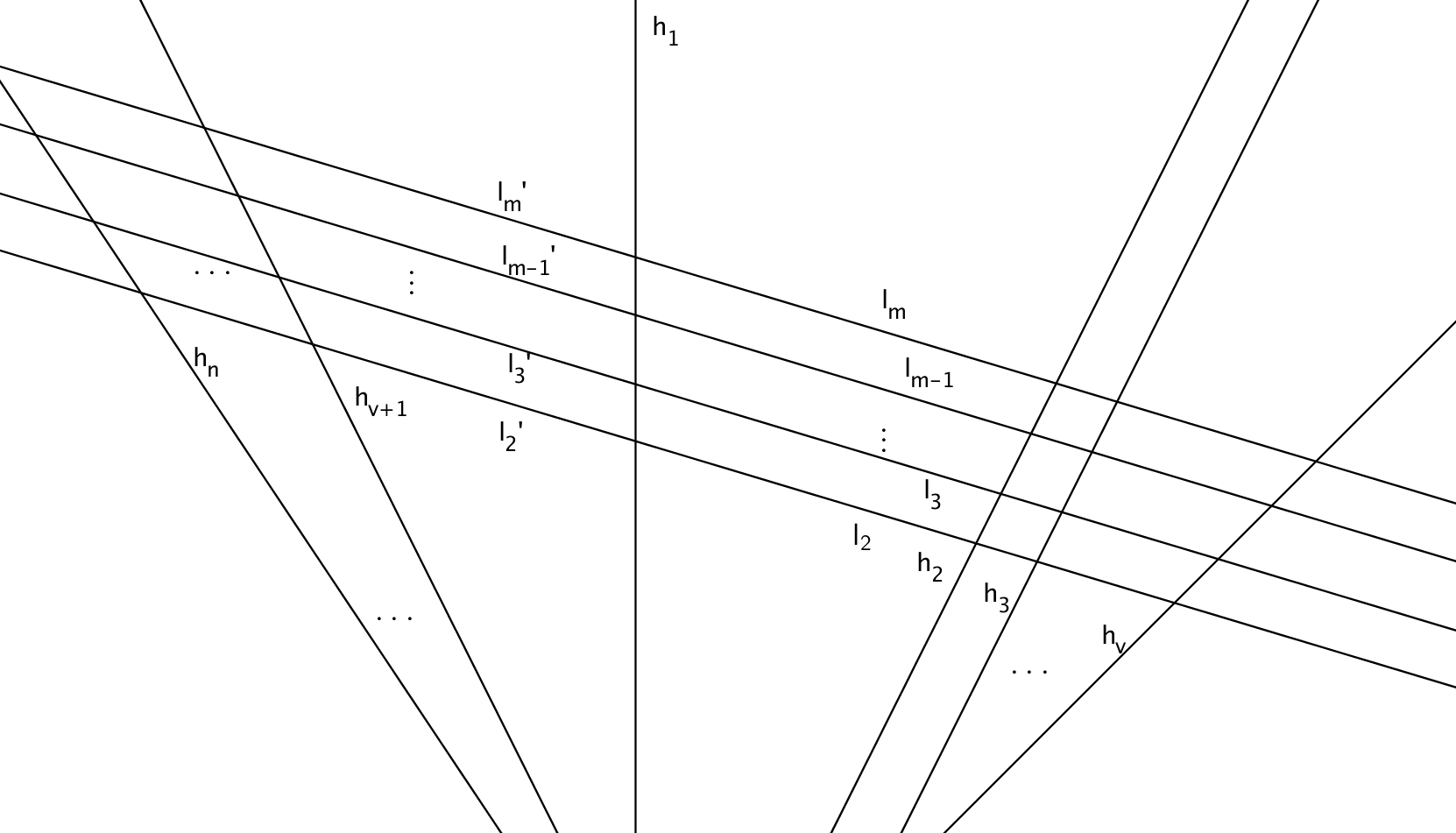}

 		\end{center}
                \caption{The arrangement  \( \mathcal{E} \) consists of  \( \baseArr \) and  a set of parallel lines in general position. }
 											\label{parallel}
 					\end{figure}

In this way, we may label the generators for the fundamental group of \( M(\baseArr) \) by  \( h_1, \dots, h_n \) and label generators for the lines of  \( (y-m_2x-R)\prod\limits_{k=3}^m (y-m_2 x - (R+k)) \) by  \( l_k.\) As all of the  \( h_j's \) and  \( l_k's  \) intersect in points of multiplicity two, we have the relators in the presentation of the fundamental group given by  \(\{ [h_j,l_k]  \mid  1 \leq j \leq n, 2 \leq k \leq m \}.\) The  \( h_j \)'s are the same generators used in the presentation from Lemma \ref{present} so we have a presentation for the fundamental group of the complement of  \( \mathcal{E}\) given by  
\[
\left\langle\begin{matrix}h_1,\dots, h_n, \\  l_2,\dots, l_m \end{matrix}  ~\left\lvert~ \begin{matrix} [l_k, h_j]  & \textrm{ for } 2 \leq k \leq m\\ R_h & 1 \leq j \leq n  \end{matrix} \right. \right\rangle.
\]

Certainly, the 2-complexes built from  \( \mathcal{P}_{\mathcal{E} }  \) and  \( \mathcal{P}_{\baseHOne} \) are homotopy equivalent by relabelling  \( h_1 \) in  \( \mathcal{P}_{\baseAtInfinity} \) by  \( g. \) Therefore we have the following sequence of homotopy equivalencies: \[
M(\baseAtInfinity)  \sim |\mathcal{P}_{\baseAtInfinity} | \sim |\mathcal{P}_{\mathcal{E}}|  \sim |\mathcal{P}_{\baseHOne}| \sim M(\baseHOne).
\]

\end{proof}

Now we use Theorem \ref{main} to prove Theorem \ref{arb} as a corollary.

\begin{proof} \textit{of Theorem \ref{arb}} 
  Let  \( \mathcal{B}_{H_1} \) and \( \mathcal{B}_{H_2} \) be defined as in the statement of the theorem. From Theorem \ref{main}, there exist arrangements \( \baseAtInfinity^1 \) and  \( \baseAtInfinity^2 \) in  \( \mathbb{C}^2 \) such that \begin{itemize}
  \item the arrangement  \( \baseAtInfinity^i \) is the union of two subarrangements  \( \baseArr \) and  \( \parLines \) that have no lines in common,
  \item \( \parLines \) is a set of  \( m-1 \) parallel lines
  \item every line in  \( \parLines \) intersects each line  of  \( \arr \) in a point of multiplicity two.
  \end{itemize} Further, we have that  \( M(\mathcal{B}_{H_i}) \) is homotopy equivalent to  \( M(\baseAtInfinity^i) \).
  From Theorem \ref{diffeo}, we have that  \( \baseAtInfinity^1 \) and  \( \baseAtInfinity^2  \) have homotopy equivalent complements. Therefore we can conclude that  \( M(\mathcal{B}_{H_1})  \) is homotopy equivalent to  \( M(\mathcal{B}_{H_2}) \) for any  \( H_1,H_2 \in \arr. \)
\end{proof}

\section{ Example}\label{examples}

Consider the arrangements  \( \baseHOne \) and  \( \baseAtInfinity \) defined by the following polynomials 
\begin{align*}
Q(\baseHOne) &=  xy (y-1)(y-2)(y+x-2)(y-x) (y+3x+1)(y+4x+1)(y+5x+1)\\
Q(\baseAtInfinity) &= xy(y-1)(y-2)(y+x-2)(y-x)(y+3x+3)(y+3x+2)(y+3x+1)
\end{align*} and depicted in Figure \ref{fig:examples}. If we let  \( \baseArr \) be defined by  \( Q(\baseArr) = xy(y-1)(y-2)(y+x-2)(y-x) \) then we can see that  \( \baseHOne \) and  \( \baseAtInfinity \) define arrangements as described in Theorem \ref{main} with the line  \( H_1 \) defined by the kernel of  \( \alpha_{H_1}=x. \) Therefore,  \( M(\baseHOne) \) is homotopy equivalent to  \( M(\baseAtInfinity). \)

 \begin{figure}[h!]
 \begin{center} 

\includegraphics[width=2in]{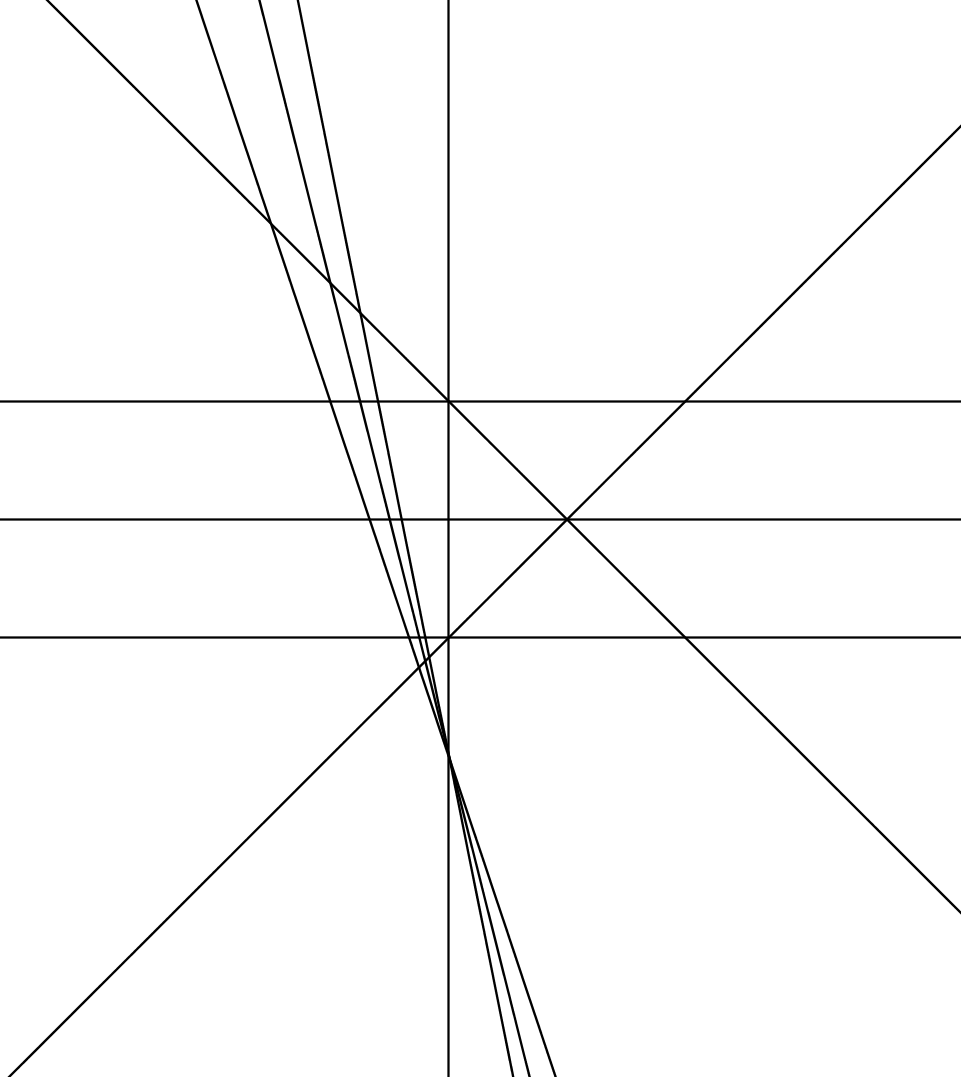} \hspace{0.5in}
\includegraphics[width=2in]{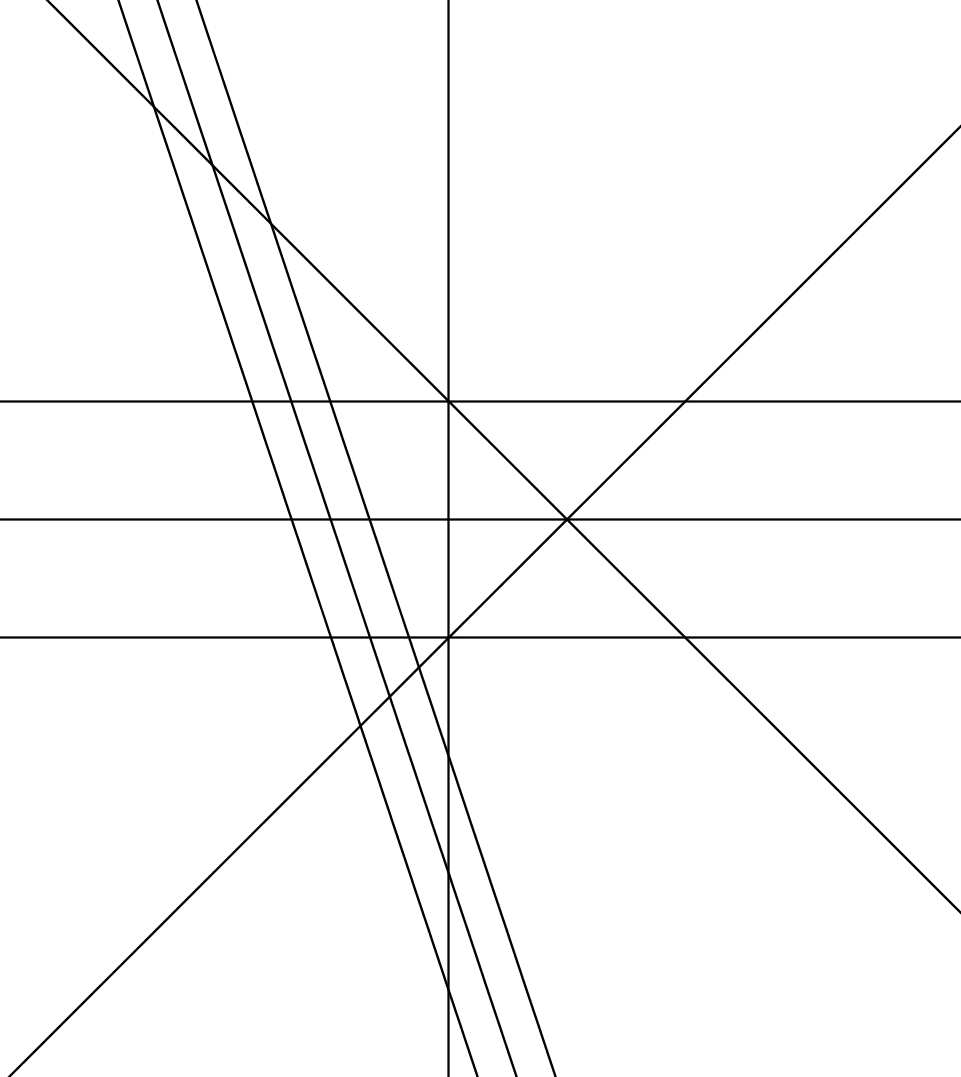}
 		\end{center}
                \caption{The arrangements  \( \baseHOne \) on the left and  \( \baseAtInfinity \) on the right.}
 											\label{fig:examples}
 					\end{figure}

However, we know that  \( M(\baseHOne) \) is not homeomorphic to  \( M(\baseAtInfinity) \). Let  \( \baseHOne^* \) denote the projectivized arrangement in  \( \CP^2 \) formed by embedding  \( \CC^2  \) in  \( \CP^2 \) and adding the ``line at infinity'' to the arrangement  \( \baseHOne. \) 

Then, we can see that  \( M(\baseHOne^*) \) is homeomorphic to  \( M(\baseHOne).\) 

Similarly, we have  \( \baseAtInfinity^* \) as an arrangement in  \( \CP^2 \) with  \( M(\baseAtInfinity^*)  \) homeomorphic to  \( M(\baseAtInfinity). \) As  \( \baseAtInfinity^* \) contains one line with two points of multiplicity four, but  \( \baseHOne^* \) contains no such line, we have that the intersection lattices of  \( \baseAtInfinity^* \) and  \( \baseHOne^* \) are not isomorphic. By a result of \cite{JY-lattice}, this implies that  \( M(\baseAtInfinity^*) \) is not homeomorphic to  \( M(\baseHOne^*). \)

	\bibliographystyle{amsalpha} % -fi on end?

%	\bibliography{../../papers/references-MSN}

\providecommand{\bysame}{\leavevmode\hbox to3em{\hrulefill}\thinspace}
\providecommand{\MR}{\relax\ifhmode\unskip\space\fi MR }
% \MRhref is called by the amsart/book/proc definition of \MR.
\providecommand{\MRhref}[2]{%
  \href{http://www.ams.org/mathscinet-getitem?mr=#1}{#2}
}
\providecommand{\href}[2]{#2}

%\contact
%\todo[inline]{Last updated \today}
%\makeatletter 
%\providecommand\@dotsep{5}
%\makeatother 
%\listoftodos\relax 

\end{document}